\documentclass[final]{siamltex}

\usepackage{amsmath}
\usepackage{amsfonts}
\usepackage{eucal}

\newtheorem{assumption}[theorem]{Assumption}

\newcommand{\Z}{\mathbb{Z}}                     % the integers
\newcommand{\R}{\mathbb{R}}                     % the reals
\newcommand{\sph}{\mathcal{S}}                  % the sphere
\newcommand{\hilb}{\mathcal{H}}                 % hilbert space H
\newcommand{\ip}[2]{\langle#1,#2\rangle}        % inner product notation
\newcommand{\abs}[1]{\lvert#1\rvert}            % abs notation
\newcommand{\norm}[1]{\lVert#1\rVert}           % norm notation
\newcommand{\cdotc}{\,\cdot\,}                  % central dot

\title{Extending the range of error estimates for radial
approximation in Euclidean space and on spheres}

\author{R. A. Brownlee\footnotemark[2]\ \footnotemark[3] \and E. H. Georgoulis\footnotemark[2]
\and J. Levesley\footnotemark[2]}

\begin{document}

\maketitle
\renewcommand{\thefootnote}{\fnsymbol{footnote}}
\footnotetext[2]{Department of Mathematics, University of Leicester,
Leicester LE1 7RH, England}
\footnotetext[3]{This author was
partially supported by a studentship from the Engineering and
Physical Sciences Research Council.}
\renewcommand{\thefootnote}{\arabic{footnote}}

\begin{abstract}
We adapt Schaback's error doubling trick~[R.~Schaback.
\newblock Improved error bounds for scattered data interpolation by radial
  basis functions.
\newblock {\em Math. Comp.}, 68(225):201--216, 1999.]
to give error estimates for radial interpolation of functions with
smoothness lying (in some sense) between that of the usual native
space and the subspace with double the smoothness. We do this for
both bounded subsets of $\R^d$ and spheres. As a step on the way to
our ultimate goal we also show convergence of pseudoderivatives of
the interpolation error.
\end{abstract}

\begin{keywords}
multivariate interpolation, radial basis functions, error estimates,
smooth functions
\end{keywords}

\begin{AMS}
41A05, 41A15, 41A25, 41A30, 41A63
\end{AMS}

\pagestyle{myheadings} \thispagestyle{plain} \markboth{R. A.
BROWNLEE, E. H. GEORGOULIS, AND J. LEVESLEY}{EXTENDING THE RANGE OF
ERROR ESTIMATES}

\section{Introduction}

In this paper we are interested in extending the range of
applicability of error estimates for radial basis function
interpolation in Euclidean space and on spheres. Let $\Omega$ be a
subset of $\R^d$, or the sphere.  Let $d(x,y)$ denote the distance
between two points in $\Omega$. Let $Y \subset \Omega$ be a finite
set of points, and measure the \textit{fill-distance} of $Y$ in
$\Omega$ with
\begin{equation*}
    h(Y,\Omega) := \sup_{x \in \overline{\Omega}} \min_{y \in Y} d(x,y).
\end{equation*}

Given a univariate function $\phi$ defined either on $\R_+$ or
$[0,\pi]$, depending on whether we are in Euclidean space or on the
sphere, we form an approximation
\begin{equation*}
    S^Y_\phi (x) = \sum_{y \in Y} \alpha_y \phi(d(x,y)).
\end{equation*}
If the coefficients $\alpha_y$, $y \in Y$, are determined by the
interpolation conditions
\begin{equation*}
    S_\phi^Y(y) = f(y),\qquad \text{for $y \in Y$,}
\end{equation*}
we refer to $S^Y_\phi$ as the $\phi$\nobreakdash-spline interpolant to $f$ on $Y$.

We will be approximating functions $f \in \hilb_\phi$, a Hilbert
space of functions which depends on the function $\phi$---the
so-called \textit{native space}. Later we will be more explicit
about this space of functions. With this Hilbert space we have a
inner product $\ip{\cdotc}{\cdotc}_\phi$, with associated norm
$\norm{\cdotc}_\phi$. We will require the following useful
orthogonality and consequent \textit{Pythagorean} property; see,
e.g.,~\cite{bgl_levesley03,bgl_light98}.
\begin{proposition}\label{bgl_normmin}
Let $S_\phi^Y$ be the $\phi$\nobreakdash-spline interpolant to $f$
on the point set $Y \subset \Omega$. Then, for all $f \in
\hilb_\phi$,
\begin{enumerate}
    \item $\ip{f - S_\phi^Y}{S_\phi^Y}_\phi = 0$;
    \item $\norm{f}_\phi^2 = \norm{f-S_\phi^Y}_\phi^2+\norm{S_\phi^Y}_\phi^2$.
\end{enumerate}
\end{proposition}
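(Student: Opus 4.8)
The plan is to obtain statement~2 from statement~1 by a one-line expansion of a squared norm, and to obtain statement~1 from the reproducing property of the native space combined with the interpolation conditions. First I would invoke the explicit description of $\hilb_\phi$ promised later in the paper: $\hilb_\phi$ is (essentially) a reproducing kernel Hilbert space whose kernel is built from $\phi$, so that each section $\phi(d(\cdot,y))$, $y\in\Omega$, lies in $\hilb_\phi$ and satisfies the reproducing identity $\ip{g}{\phi(d(\cdot,y))}_\phi = g(y)$ for every $g\in\hilb_\phi$. In the conditionally positive definite situation this is replaced by the statement that a combination $\sum_{y\in Y}\alpha_y\phi(d(\cdot,y))$ whose coefficients annihilate the finite-dimensional polynomial part represents the functional $g\mapsto\sum_{y\in Y}\alpha_y g(y)$; the argument below is unaffected by this change. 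In particular $S_\phi^Y=\sum_{y\in Y}\alpha_y\phi(d(\cdot,y))\in\hilb_\phi$, so the inner products in the statement are meaningful, and $f-S_\phi^Y\in\hilb_\phi$.

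For part~1, I would apply the reproducing identity to $g=f-S_\phi^Y$ after expanding $S_\phi^Y$ in its defining representation:
\[
  \ip{f-S_\phi^Y}{S_\phi^Y}_\phi
  = \sum_{y\in Y}\alpha_y\,\ip{f-S_\phi^Y}{\phi(d(\cdot,y))}_\phi
  = \sum_{y\in Y}\alpha_y\bigl(f(y)-S_\phi^Y(y)\bigr).
\]
By the interpolation conditions $S_\phi^Y(y)=f(y)$ for every $y\in Y$, each summand vanishes, which gives the orthogonality in part~1.

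For part~2, I would write $f=(f-S_\phi^Y)+S_\phi^Y$ and expand:
\[
  \norm{f}_\phi^2
  = \norm{f-S_\phi^Y}_\phi^2 + 2\,\ip{f-S_\phi^Y}{S_\phi^Y}_\phi + \norm{S_\phi^Y}_\phi^2,
\]
and the cross term is zero by part~1, yielding the Pythagorean identity. The only genuine content is part~1, and the only step that needs care is the precise form of the reproducing mechanism in the conditionally positive definite case (ensuring the relevant sections or their admissible combinations sit in $\hilb_\phi$ and represent point evaluation up to polynomials); once the native space is described explicitly this is routine bookkeeping, and I do not anticipate a real obstacle.
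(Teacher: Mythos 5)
Your proof is correct: the reproducing-kernel identity applied to the kernel sections making up $S_\phi^Y$, combined with the interpolation conditions, gives the orthogonality, and the Pythagorean identity follows by expanding $\norm{(f-S_\phi^Y)+S_\phi^Y}_\phi^2$. The paper itself does not prove this proposition but cites references for it, and your argument is essentially the standard one found there (including the correct remark about the conditionally positive definite case, where the side conditions on the coefficients make the combination represent the point-evaluation functional modulo the polynomial null space), so there is nothing to fault.
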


The usual error estimate for $\phi$\nobreakdash-spline interpolants
is of the form
\begin{equation*}
    \abs{f(x) - S_\phi^Y(x)} \leq P(x,Y,\phi) \norm{f - S_\phi^Y}_\phi,
\end{equation*}
where estimation of $P(x,Y,\phi)$---the so-called \textit{power
function}---leads to error estimates for interpolation in terms of
the fill-distance $h(Y,\Omega)$. For the archetypal function in
$\hilb_\phi$ we can say no more than $\norm{f - S_\phi^Y}_\phi
\rightarrow 0$ as $h(Y,\Omega) \rightarrow 0$. However, if $f$ has
double the smoothness (in some sense to made clear later) than the
typical function, then Schaback~\cite{bgl_schaback99} has shown how
to double the convergence order of the $\phi$\nobreakdash-spline
interpolant.

We show how to get improved orders of convergence when the target
function, $f$, has less smoothness than Schaback requires, but more
smoothness than the typical function. We shall be doing this on the
sphere (though this can be easily generalised to other two-point
homogeneous spaces) in \S\ref{bgl_sphere} and in Sobolev spaces on
Euclidean space in \S\ref{bgl_euclid}. An intermediate result in
both cases is to prove approximation orders for pseudoderivatives of
the interpolant. We will define this notation at the appropriate
place in each of the following sections.

In each case we shall be concerned with the practical scenario in
which $Y$ consists of a finite number of points. Foregoing this
assumption is of theoretical interest. In particular, in Euclidean
space for (perturbed) gridded data, certain improved error estimates
are already known to hold for functions within the native space
itself (see, e.g.,~\cite{bgl_buhmann03}).

The goal in this paper is quite different from the desire to
establish error estimates for functions possessing insufficient
smoothness for admittance in the native space. In recent years,
contributions in that direction has been provided by several
authors,
e.g.,~\cite{bgl_narcowich02_1,bgl_narcowich02_2,bgl_levesley05} for
the sphere and~\cite{bgl_yoon01,bgl_brownlee04,bgl_narcowich04} for
the Euclidean case.

\section{The sphere}\label{bgl_sphere}

Let $\sph^d = \{x \in \R^{d+1}:\ \abs{x}=1\}$. Then the geodesic
distance between points $x,y \in \sph^d$ is $d(x,y)=\cos^{-1} xy$,
where $xy$ denotes the usual inner product of vectors in $\R^{d+1}$.
We let $\nu$ denote the normalised rotationally invariant measure on
the sphere and define the inner product
\begin{equation*}
    \ip{f}{g}_{L_2(\sph^d)} := \int_{\sph^{d}}
f(x)g(x)\,\mathrm{d}\nu(x).
\end{equation*}
Let $\norm{\cdotc}_{L_2(\sph^d)} :=
\ip{\cdotc}{\cdotc}_{L_2(\sph^d)}^{1/2}$ and let $L_2(\sph^d)$
denote the set of functions for which $\norm{\cdotc}_{L_2(\sph^d)} <
\infty$. Let $P_n$ be the polynomials of degree $n$ in $\R^{d+1}$
restricted to the sphere, and let $H_n = P_n \cap P_n^\perp$ be the
space of degree $n$ spherical harmonics. Then, $L_2(\sph^d)$ has the
decomposition
\begin{equation*}
    L_2(\sph^d) = \bigoplus_{n \geq 0} H_n.
\end{equation*}
Let $Y^n_1,\dotsc,Y^n_{d_n}$ be an orthonormal basis for $H_n$.

Related to $\sph^d$ (we will see why shortly), we have the
Gegenbauer polynomials  $C_n^{(\lambda)}(t)$ which are orthogonal on
$[-1,1]$ with respect to the weight $(1-t^2)^{\lambda-1/2}$. It is
well known (M\"uller~\cite{bgl_muller66}, for instance) that the
following addition formula holds:
\begin{equation*}
C_n^{(\lambda)}(xy) = \sum_{j=1}^{d_n} Y^n_j(x) Y^n_j(y),
\end{equation*}
with $\lambda=d/2-1$. The normalisation of the Gegenbauer
polynomials is chosen so that there is no constant in the addition
theorem. It is straightforward to see that $C_n^{(\lambda)}$ is the
kernel of $T_n$, the orthogonal projector from $L_2(\sph^d)$ onto
$H_n$. Thus,
\begin{equation*}
    (T_n f)(x) = \int_{\sph^d} f(y) C_n^{(\lambda)}(xy)\,\mathrm{d}\nu(y),\qquad \text{
    for all $f \in L_2(\sph^d)$.}
\end{equation*}

The following lemma is a specialisation of a result in
\cite{bgl_levesley03} to the sphere.
\begin{lemma}\label{bgl_normsrat}
For $n\geq 0$,
\begin{equation*}
 \norm{T_n f}_{L_\infty(\sph^d)} \leq \sqrt{d_n}\norm{T_n
 f}_{L_2(\sph^d)},\qquad\text{for all $f \in L_2(\sph^d)$.}
\end{equation*}
\end{lemma}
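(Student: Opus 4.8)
The plan is to exploit the fact that $T_n f$ lies in the finite-dimensional space $H_n$, together with the addition formula, to reduce the estimate to a pointwise Cauchy--Schwarz inequality. First I would expand $T_n f = \sum_{j=1}^{d_n} c_j Y_j^n$ in the orthonormal basis, so that $\norm{T_n f}_{L_2(\sph^d)}^2 = \sum_{j=1}^{d_n} c_j^2$ by Parseval. For a fixed $x \in \sph^d$, Cauchy--Schwarz then gives
\begin{equation*}
    \abs{(T_n f)(x)}^2 = \Bigl(\sum_{j=1}^{d_n} c_j Y_j^n(x)\Bigr)^2 \leq \Bigl(\sum_{j=1}^{d_n} c_j^2\Bigr)\Bigl(\sum_{j=1}^{d_n} Y_j^n(x)^2\Bigr) = \norm{T_n f}_{L_2(\sph^d)}^2 \sum_{j=1}^{d_n} Y_j^n(x)^2.
\end{equation*}

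The remaining task is to evaluate $\sum_{j=1}^{d_n} Y_j^n(x)^2$. Setting $y = x$ in the addition formula gives $\sum_{j=1}^{d_n} Y_j^n(x)^2 = C_n^{(\lambda)}(xx) = C_n^{(\lambda)}(1)$, which in particular shows this quantity is independent of $x$ (as it must be, by rotational invariance of the sphere). To pin down the constant $C_n^{(\lambda)}(1)$, I would integrate over $\sph^d$: since the $Y_j^n$ are $L_2(\sph^d)$-orthonormal,
\begin{equation*}
    C_n^{(\lambda)}(1) = \int_{\sph^d} \sum_{j=1}^{d_n} Y_j^n(x)^2 \,\mathrm{d}\nu(x) = \sum_{j=1}^{d_n} \norm{Y_j^n}_{L_2(\sph^d)}^2 = d_n.
\end{equation*}
Substituting this back and taking the supremum over $x \in \sph^d$ yields $\norm{T_n f}_{L_\infty(\sph^d)} \leq \sqrt{d_n}\,\norm{T_n f}_{L_2(\sph^d)}$, as claimed.

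There is no serious obstacle here; the only point requiring a little care is justifying that $C_n^{(\lambda)}(1) = d_n$, which is exactly where the stated normalisation of the Gegenbauer polynomials (no constant in the addition theorem) is used. An alternative route that avoids expanding in a basis is to use that $C_n^{(\lambda)}(x\,\cdotc)$ is the reproducing kernel of $H_n$ in $L_2(\sph^d)$, so that $(T_n f)(x) = \ip{T_n f}{C_n^{(\lambda)}(x\,\cdotc)}_{L_2(\sph^d)}$ and Cauchy--Schwarz combined with $\norm{C_n^{(\lambda)}(x\,\cdotc)}_{L_2(\sph^d)}^2 = C_n^{(\lambda)}(1) = d_n$ gives the same bound; I would present whichever is shorter in context, but the basis-expansion argument above is the most self-contained.
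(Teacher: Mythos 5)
Your proof is correct. Note that the paper itself gives no argument for this lemma: it simply cites it as a specialisation to $\sph^d$ of a result for homogeneous manifolds in the Levesley--Odell--Ragozin reference, so your basis-expansion argument is a valid self-contained replacement rather than a reproduction of the paper's route. The one delicate point, $C_n^{(\lambda)}(1)=d_n$, you handle correctly: it follows from integrating the addition formula at $y=x$ against the \emph{normalised} measure $\nu$ (with $\nu(\sph^d)=1$), which is exactly where the paper's chosen normalisation of the Gegenbauer polynomials enters. Your alternative reproducing-kernel phrasing, $(T_nf)(x)=\ip{T_nf}{C_n^{(\lambda)}(x\,\cdotc)}_{L_2(\sph^d)}$ with $\norm{C_n^{(\lambda)}(x\,\cdotc)}_{L_2(\sph^d)}^2=C_n^{(\lambda)}(1)$, is the same estimate in disguise and is closer in spirit to the general manifold result the paper cites; either version suffices here.
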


We will be considering interpolation using kernels of the form
$\phi(d(x,y))$ where $\phi:[0,\pi] \rightarrow \R$. We will assume
that the function $\phi$ has an expansion \begin{equation*}
    \phi(d(x,y)) = \sum_{n \geq 0} a_n C_n^{(\lambda)}(xy),
\end{equation*}
where $a_n>0$, for $n=0,1,\dotsc$, and
\begin{equation*}
    \sum_{n \geq 0} d_n a_n < \infty.
\end{equation*}
The first condition ensures that $\phi$ is positive definite, and
the second that it is continuous. Our analysis will take place in
the native space for $\phi$, $\hilb_\phi$, defined by
\begin{equation*}
    \hilb_\phi := \biggl\{f \in L_2(\sph^d):\
    \norm{f}_\phi := \biggl( \sum_{n \geq 0} a_n^{-1}
    \norm{T_n f}_{L_2(\sph^d)}^2 \biggr)^{\frac{1}{2}} < \infty \biggr\}.
\end{equation*}

A pseudodifferential operator $\Lambda$ on $\sph^d$ is an operator
which acts via multiplication by a constant on each eigenspace
$H_{n}$:
\begin{equation*}
    \Lambda p_{n} = \lambda_{n} p_{n},\qquad \text{$p_{n} \in H_{n}$, $n=0,1,\dotsc$}.
\end{equation*}
For more information on pseudodifferential operators on spheres see,
e.g.,~\cite{bgl_freeden98,bgl_svensson83}. We call the sequence of
numbers $\{\lambda_{n}\}_{n \geq 0}$ the \textit{symbol} of
$\Lambda$. Let $\delta_x$ denote the point evaluation functional at
$x$, and, when it makes sense for the functional $\mu$, let $\Lambda
\mu(f) = \mu(\Lambda f)$. Let us denote by $Y^*$ the span of the
point evaluation functionals supported on $Y$. In Morton and
Neamtu~\cite{bgl_morton02} the authors give error estimates for the
collocation solution of pseudodifferential equations on spheres.
Here we attempt, initially, to find errors in pseudoderivatives of
solutions to the interpolation problem.

\begin{proposition} \label{bgl_funcbnd}
Let $S_\phi^Y$ be the $\phi$\nobreakdash-spline interpolant to $f
\in \hilb_\phi$ on the point set $Y \subset \sph^d$. Let $\Lambda$
be a pseudodifferential operator. Then, for each $x \in \sph^d$,
\begin{equation*}
    \abs{\Lambda (f-S_\phi^Y)(x)} \le \inf_{\mu \in Y^*}
    \sup_{\substack{v \in \hilb_\phi\\ \norm{v}_\phi=1}} \abs{\Lambda v
    (x) -  \mu(v)}  \norm{f-S_\phi^Y}_\phi .
\end{equation*}
\end{proposition}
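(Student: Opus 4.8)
The plan is to read off the left-hand side as the value of a single linear functional applied to the interpolation error $f - S_\phi^Y$, and then to minimise the native-space operator norm of that functional over all choices consistent with the interpolation data.

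Fix $x \in \sph^d$ and an arbitrary $\mu \in Y^*$. Following the convention introduced above, write $\Lambda\delta_x$ for the functional $v \mapsto \delta_x(\Lambda v) = (\Lambda v)(x)$, and set $\eta_\mu := \Lambda\delta_x - \mu$. The crucial observation is that, by the interpolation conditions, $f - S_\phi^Y$ vanishes at every point of $Y$; since $\mu$ is a finite linear combination of point evaluation functionals supported on $Y$, it follows that $\mu(f - S_\phi^Y) = 0$, and hence
\begin{equation*}
    \Lambda(f - S_\phi^Y)(x) = \Lambda(f - S_\phi^Y)(x) - \mu(f - S_\phi^Y) = \eta_\mu(f - S_\phi^Y).
\end{equation*}
Note also that $f - S_\phi^Y \in \hilb_\phi$, since $f \in \hilb_\phi$ by hypothesis and each $\phi(d(\cdotc,y))$ is the reproducing kernel of $\hilb_\phi$ and so lies in $\hilb_\phi$.

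The main step is then just duality in $\hilb_\phi$. If $\eta_\mu$ is a bounded functional on $\hilb_\phi$, Cauchy--Schwarz (equivalently, the Riesz representation theorem) gives
\begin{equation*}
    \abs{\Lambda(f - S_\phi^Y)(x)} = \abs{\eta_\mu(f - S_\phi^Y)} \le \norm{\eta_\mu}_{\hilb_\phi^*}\,\norm{f - S_\phi^Y}_\phi,
\end{equation*}
and by definition $\norm{\eta_\mu}_{\hilb_\phi^*} = \sup_{v \in \hilb_\phi,\ \norm{v}_\phi = 1}\abs{\Lambda v(x) - \mu(v)}$; if instead $\eta_\mu$ is unbounded, that supremum is $+\infty$ and the inequality is trivially true. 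Since $\mu \in Y^*$ was arbitrary, taking the infimum over $\mu \in Y^*$ yields the asserted bound.

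I do not expect a genuine obstacle. The argument is the standard optimal-recovery estimate, and the content of the proposition is carried entirely by the trivial identity $\mu(f - S_\phi^Y) = 0$ coming from interpolation, together with duality in the native space. The only point that deserves a word of care is the status of $\Lambda\delta_x$ as a functional on $\hilb_\phi$: for a general pseudodifferential operator it need not be bounded, but precisely in that case the right-hand side of the claimed inequality is infinite and there is nothing to prove, so no assumption on the symbol $\{\lambda_n\}$ is needed at this stage. The real work — estimating the infimum over $\mu \in Y^*$ in terms of the fill-distance — is deferred to the subsequent development.
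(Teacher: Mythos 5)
Your proof is correct and follows essentially the same route as the paper: both exploit that the error vanishes on $Y$ so that subtracting any $\mu \in Y^*$ applied to $f - S_\phi^Y$ changes nothing, then bound the resulting functional $\Lambda\delta_x - \mu$ by its supremum over unit-norm elements of $\hilb_\phi$ (the paper does this by normalising $(f-S_\phi^Y)/\norm{f-S_\phi^Y}_\phi$, you by invoking the dual norm directly), and finally take the infimum over $\mu$. Your explicit remarks on the unbounded case and on avoiding division by $\norm{f-S_\phi^Y}_\phi$ are harmless refinements of the same argument.
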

\begin{proof}
Since $f(y)-S_\phi(y)=0$, $y \in Y$, we have, for any coefficients
$c_y$, $y \in Y$,
\begin{align*}
    \abs{\Lambda (f-S_\phi^Y)(x)} &= \biggl| \Lambda (f-S_\phi^Y)(x) - \sum_{y \in Y} c_y (f(y)-S_\phi^Y(y)) \biggr| \\
    &=  \biggl| \Lambda (f-S_\phi^Y)(x) - \sum_{y \in Y} c_y (f(y)-S_\phi^Y(y)) \biggr| \frac{\norm{ f-S_\phi^Y}_\phi}{\norm{ f-S_\phi^Y}_\phi} \\
    &\le \sup_{\substack{v \in \hilb_\phi\\ \norm{v}_\phi=1}}  \biggl| \Lambda v(x) - \sum_{y \in Y} c_y
    v(y) \biggr| \norm{ f-S_\phi^Y}_\phi.
\end{align*}
We now take the infimum over all functionals in $Y^*$ to obtain the
result.
\end{proof}

In what follows we will need the pseudodifferential operator
$\Lambda$ to satisfy the following assumption:
\begin{assumption}\label{bgl_ass}
For all $n \ge 0$, $\lambda_n=(n(d+n-2))^s$, for some $s>0$.
\end{assumption}
From Ditzian \cite{bgl_ditzian98}, if $\Lambda$ satisfies
Assumption~\ref{bgl_ass},  then for $p \in P_n$,
\begin{equation*}
 \norm{\Lambda p}_{L_\infty(\sph^d)} \le  E \lambda_n \norm{p}_{L_\infty(\sph^d)},
\end{equation*}
for some $E$ independent of $n$.

From \cite[Lemma 7]{bgl_jetter99} we have the following result.
\begin{lemma}
Let $Y$ be a finite set of points with fill-distance $h(Y,\sph^d)
\le 1/(2N)$, for some fixed $N \in \Z_+$. Then, for any linear
functional $\gamma$ on $P_N$ with
\begin{equation*}
\sup_{\substack{p \in P_N\\ \norm{p}_{L_\infty(\sph^d)}=1}}
\abs{\gamma p} \leq 1,
\end{equation*}
there is a set of real numbers $\{ b_y \}_{y \in Y}$, with $\sum_{y
\in Y} \abs{b_y} \leq 2$, such that
\begin{equation*}
\gamma p = \sum_{y \in Y} b_y p(y), \qquad \text{for all $p \in
P_N$}.
\end{equation*}
\end{lemma}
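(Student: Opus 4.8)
The plan is to reduce the statement to a Hahn--Banach extension once $Y$ is shown to be a \emph{norming set} for $P_N$. The first step is to prove that, under the hypothesis $h(Y,\sph^d)\le 1/(2N)$,
\begin{equation*}
 \norm{p}_{L_\infty(\sph^d)} \le 2\max_{y\in Y}\abs{p(y)},\qquad\text{for all }p\in P_N.
\end{equation*}
To obtain this, fix $p\in P_N$, let $x^\ast\in\sph^d$ be a maximum point of $\abs{p}$ on the sphere, and pick $y\in Y$ with $d(x^\ast,y)\le h(Y,\sph^d)\le 1/(2N)$. The Bernstein inequality for spherical polynomials gives $\norm{\nabla p}_{L_\infty(\sph^d)}\le N\norm{p}_{L_\infty(\sph^d)}$ for the surface gradient $\nabla p$, so integrating $\nabla p$ along the minimising geodesic from $x^\ast$ to $y$ yields
\begin{equation*}
 \abs{p(x^\ast)-p(y)}\le \norm{\nabla p}_{L_\infty(\sph^d)}\,d(x^\ast,y)\le N\norm{p}_{L_\infty(\sph^d)}\cdot\frac{1}{2N}=\frac12\norm{p}_{L_\infty(\sph^d)},
\end{equation*}
and hence $\abs{p(y)}\ge\tfrac12\norm{p}_{L_\infty(\sph^d)}$, which is the claimed inequality.

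The norming inequality says exactly that the restriction map $R\colon P_N\to\ell_\infty(Y)$, $Rp=(p(y))_{y\in Y}$, is injective and that its inverse on the range $V:=R(P_N)$ satisfies $\norm{R^{-1}}_{V\to(P_N,\,L_\infty)}\le 2$. Given $\gamma$ as in the statement, the composition $\gamma\circ R^{-1}$ is a well-defined linear functional on $V\subset\ell_\infty(Y)$ whose norm is at most $\norm{R^{-1}}\cdot\sup\{\abs{\gamma p}:\norm{p}_{L_\infty(\sph^d)}=1\}\le 2$. By Hahn--Banach it extends to a functional on all of $\ell_\infty(Y)$ of norm at most $2$; since $Y$ is finite, $\ell_\infty(Y)^\ast=\ell_1(Y)$ isometrically, so the extension is represented by a vector $\{b_y\}_{y\in Y}$ with $\sum_{y\in Y}\abs{b_y}\le 2$. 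Finally, for every $p\in P_N$,
\begin{equation*}
 \gamma p=(\gamma\circ R^{-1})(Rp)=\sum_{y\in Y}b_y\,p(y),
\end{equation*}
which is the assertion.

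The only substantial ingredient is the Bernstein inequality $\norm{\nabla p}_{L_\infty(\sph^d)}\le N\norm{p}_{L_\infty(\sph^d)}$ with the sharp constant $N$; this is classical and follows, for instance, by restricting $p$ to great circles and applying the trigonometric Bernstein inequality, but it is precisely what makes the hypothesis $h\le 1/(2N)$ deliver the clean bound $\sum_y\abs{b_y}\le 2$. Everything else --- the geodesic interpolation estimate, the identification $\ell_\infty(Y)^\ast=\ell_1(Y)$ in finite dimensions, and the Hahn--Banach extension --- is routine; the point requiring care is to verify that the Bernstein constant is genuinely $\le N$ in the normalisation used, since a worse constant would force a smaller admissible fill-distance and a larger bound on $\sum_y\abs{b_y}$.
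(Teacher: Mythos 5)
Your proof is correct. Note that the paper does not prove this lemma at all --- it is imported verbatim from Jetter, St\"ockler and Ward \cite[Lemma 7]{bgl_jetter99} --- and your argument reconstructs essentially the standard proof of that cited result: the hypothesis $h(Y,\sph^d)\le 1/(2N)$ together with the Bernstein inequality (obtained by restricting $p\in P_N$ to great circles, where it becomes a trigonometric polynomial of degree at most $N$) shows that $Y$ is a norming set with constant $2$, and then Hahn--Banach plus the identification of $\ell_\infty(Y)^\ast$ with $\ell_1(Y)$ for finite $Y$ yields the weights $\{b_y\}$ with $\sum_{y\in Y}\abs{b_y}\le 2$. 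All the steps check out, including the one you rightly flag as the only delicate point, namely that the Bernstein constant is genuinely $N$ in this normalisation, which is what makes the geodesic estimate $\abs{p(x^\ast)-p(y)}\le\tfrac12\norm{p}_{L_\infty(\sph^d)}$ and hence the constant $2$ come out cleanly.
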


Now, for a fixed $x \in \sph^d$, let
\begin{equation*}
\gamma p = \frac{\Lambda p (x)}{E \lambda_N},\qquad \text{for all $p
\in P_N$.}
\end{equation*}
Then,
\begin{equation*}
\sup_{0\neq p \in P_N} \frac{ \abs{\gamma p}}{\norm{ p
}_{L_\infty(\sph^d)} } \le 1,
\end{equation*}
so that, by the previous lemma, there is a set of coefficients
$\{b_y\}_{y \in Y}$, such that
\begin{equation*}
\gamma p = \sum_{y \in Y} b_y p(y),
\end{equation*}
with $\sum_{y \in Y} \abs{b_y} \le 2$. Thus, with $c_y=E\lambda_N
b_y$, for $y \in Y$, we have
\begin{equation}
 \Lambda p (x)  = \sum_{y \in Y} c_y
p(y),\qquad \text{for all $p \in P_N$,}\label{bgl_polyrep}
\end{equation}where,
\begin{equation}
\sum_{y \in Y} \abs{c_y} \le 2 E \lambda_N. \label{bgl_coeffbnd}
\end{equation}

We now arrive at the first main result of this section.
\begin{theorem} \label{bgl_derivcon}
Let $S_\phi^Y$ be the $\phi$\nobreakdash-spline interpolant to $f
\in \hilb_\phi$, on the point set $Y \subset \sph^d$, where
$h(Y,\sph^d) \le 1/(2N)$, for some fixed $N \in \Z_+$. Let $\Lambda$
be a pseudodifferential operator with symbol  $\{\lambda_{n} \}_{n
\geq 0}$ satisfying Assumption \ref{bgl_ass} and
\begin{equation*}
\sum_{n \geq 0} d_{n} \lambda_{n}^2 a_{n} < \infty.
\end{equation*}
Then, for $x \in \sph^d$,
\begin{equation*}
  \abs{\Lambda (f-S_\phi^Y)(x)} \le (1+2E) \biggl( \sum_{n \ge N}
d_n \lambda_n^2 a_n \biggr)^{\frac{1}{2}} \norm{f-S_\phi^Y}_\phi .
\end{equation*}
\end{theorem}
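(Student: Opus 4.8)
The plan is to apply Proposition~\ref{bgl_funcbnd}, which reduces the theorem to producing one good functional $\mu\in Y^*$ and estimating $\sup_{\norm{v}_\phi=1}\abs{\Lambda v(x)-\mu(v)}$ for that choice. The obvious candidate is the functional already built in~\eqref{bgl_polyrep}: fixing $x\in\sph^d$, I would take $\mu(v)=\sum_{y\in Y}c_yv(y)$ with the coefficients $c_y$ constructed there, so that $\mu$ reproduces $\Lambda p(x)$ for every $p\in P_N$ while, by~\eqref{bgl_coeffbnd}, $\sum_{y\in Y}\abs{c_y}\le 2E\lambda_N$. Before using point values one checks, as a preliminary, that the hypothesis $\sum_{n\ge0}d_na_n<\infty$ already forces $\hilb_\phi\subset C(\sph^d)$ --- indeed $\sum_n\norm{T_nv}_{L_\infty(\sph^d)}\le(\sum_nd_na_n)^{1/2}\norm{v}_\phi$ by Lemma~\ref{bgl_normsrat} and Cauchy--Schwarz --- so that $\mu$ is a well-defined bounded functional on $\hilb_\phi$.

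Next I would fix $v\in\hilb_\phi$ with $\norm{v}_\phi=1$ and split it along the spherical-harmonic decomposition as $v=p+r$, with $p=\sum_{n=0}^{N-1}T_nv\in P_{N-1}\subset P_N$ and $r=\sum_{n\ge N}T_nv$. The reproduction property kills the low-degree part, $\Lambda v(x)-\mu(v)=\Lambda r(x)-\mu(r)$, leaving two tail terms. For the first, using that $\Lambda$ acts as multiplication by $\lambda_n$ on $H_n$, then the triangle inequality, Lemma~\ref{bgl_normsrat}, and Cauchy--Schwarz,
\begin{equation*}
\abs{\Lambda r(x)}\le\sum_{n\ge N}\lambda_n\norm{T_nv}_{L_\infty(\sph^d)}\le\sum_{n\ge N}\sqrt{d_n\lambda_n^2a_n}\,\frac{\norm{T_nv}_{L_2(\sph^d)}}{\sqrt{a_n}}\le\Bigl(\sum_{n\ge N}d_n\lambda_n^2a_n\Bigr)^{1/2}.
\end{equation*}
For the second term, $\abs{\mu(r)}\le\bigl(\sum_{y\in Y}\abs{c_y}\bigr)\norm{r}_{L_\infty(\sph^d)}\le 2E\lambda_N\norm{r}_{L_\infty(\sph^d)}$, and after bounding $\norm{r}_{L_\infty(\sph^d)}\le\sum_{n\ge N}\sqrt{d_n}\,\norm{T_nv}_{L_2(\sph^d)}$ (triangle inequality plus Lemma~\ref{bgl_normsrat}) one absorbs the uniform weight $\lambda_N$ into the summand via the monotonicity of the symbol --- $\lambda_N\sqrt{d_na_n}\le\sqrt{d_n\lambda_n^2a_n}$ for $n\ge N$ --- and again applies Cauchy--Schwarz, to get $\abs{\mu(r)}\le 2E(\sum_{n\ge N}d_n\lambda_n^2a_n)^{1/2}$. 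Adding the two estimates, taking the supremum over $\norm{v}_\phi=1$, and feeding the result into Proposition~\ref{bgl_funcbnd} gives the theorem.

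I expect the only slightly delicate point to be the $\mu(r)$ term. In $\abs{\Lambda r(x)}$ the weight $\lambda_n$ sits naturally inside the tail sum, but in $\abs{\mu(r)}$ the factor $2E\lambda_N$ comes out of the coefficient bound uniformly in $n$ and must be moved back under the sum; this is precisely where the explicit form $\lambda_n=(n(d+n-2))^s$ of Assumption~\ref{bgl_ass} --- in particular that this sequence is nondecreasing --- is used, rather than an arbitrary positive symbol. Everything else is routine bookkeeping with the norm $\norm{\cdotc}_\phi$.
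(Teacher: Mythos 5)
Your proposal is correct and follows essentially the same route as the paper: Proposition~\ref{bgl_funcbnd} with the functional built from~\eqref{bgl_polyrep}--\eqref{bgl_coeffbnd}, annihilation of the low-degree spherical-harmonic components, then Lemma~\ref{bgl_normsrat}, Cauchy--Schwarz against $\norm{v}_\phi=1$, and monotonicity of $\lambda_n$ to absorb the factor $\lambda_N$ into the tail sum. The only cosmetic differences are your explicit splitting $v=p+r$ and the preliminary continuity check, which the paper leaves implicit.
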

\begin{proof} Let is choose $\{c_y\}_{y \in Y}$ to be the coefficients described above.
Let $v \in \hilb_\phi$ with $\norm{v}_\phi = 1$. Then,
\begin{align*}
\inf_{\mu \in Y^*} \abs{ \Lambda v(x)  - \mu(v) } &\le \biggl| \sum_{n \geq 0} \biggl( \Lambda T_n v(x) - \sum_{y \in Y} c_y T_n v(y) \biggr) \biggr|\\
&= \biggl| \sum_{n>N} \biggl( \lambda_n T_nv (x) - \sum_{y \in Y}
c_y T_n v(y) \biggr) \biggr|,
\end{align*}
by (\ref{bgl_polyrep}). Thus,
\begin{align*}
\inf_{\mu \in Y^*} \abs{\Lambda v(x)  - \mu(v)} &\leq  \biggl| \sum_{n>N} \lambda_n T_n v(x) \biggr| +  \biggl| \sum_{n>N}  \sum_{y \in Y} c_y T_n v(y) \biggr| \\
&\le \sum_{n > N} \biggl( \lambda_n + \sum_{y \in Y} \abs{c_y} \biggr) \norm{T_n v}_{L_\infty(\sph^d)} \\
&\le  \sum_{n > N} \biggl( \lambda_n + \sum_{y \in Y} \abs{c_y}
\biggr) \sqrt{d_n} \norm{T_n v}_{L_2(\sph^d)},
\end{align*}
using Lemma~\ref{bgl_normsrat}. Hence, using (\ref{bgl_coeffbnd})
and the Cauchy--Schwarz inequality,
\begin{equation*}
\inf_{\mu \in Y^*} \abs{ \Lambda v(x)  - \mu(v) } \le \biggl[
\biggl( \sum_{n > N} d_n \lambda_n^2 a_n \biggr)^{\frac{1}{2}} + 2 E
\lambda_N \biggl( \sum_{n > N} d_n a_n \biggr)^{\frac{1}{2}} \biggr]
\norm{v}_\phi,
\end{equation*}
and the result follows from Proposition~\ref{bgl_funcbnd} since
$\norm{v}_\phi = 1$, and because $\{\lambda_n\}_{n\geq0}$ is an
increasing sequence.
\end{proof}

Integrating the conclusion of the previous theorem over the sphere
we easily obtain
\begin{corollary}\label{bgl_l2pseud}
Under the hypotheses of Theorem~\ref{bgl_derivcon},
\begin{equation*}
    \norm{\Lambda (f-S_\phi^Y)}_{L_2(\sph^d)} \leq (1+2E) \biggl(
    \sum_{n > N} d_n \lambda_n^2 a_n \biggr)^{\frac{1}{2}}
    \norm{f-S_\phi^Y}_\phi.
\end{equation*}
\end{corollary}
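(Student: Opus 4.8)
The plan is to square the pointwise estimate of Theorem~\ref{bgl_derivcon} and integrate it over $\sph^d$ against $\nu$, exploiting that $\nu$ is a probability measure. The key observation is that the bound in Theorem~\ref{bgl_derivcon} is uniform in $x$: its right-hand side
\begin{equation*}
    M := (1+2E)\biggl( \sum_{n > N} d_n \lambda_n^2 a_n \biggr)^{\frac{1}{2}} \norm{f-S_\phi^Y}_\phi
\end{equation*}
is a finite constant, finiteness coming from the summability hypothesis $\sum_{n\geq0} d_n \lambda_n^2 a_n < \infty$ together with $f-S_\phi^Y \in \hilb_\phi$. So the first step is simply to record that $\abs{\Lambda(f-S_\phi^Y)(x)} \le M$ for every $x \in \sph^d$.

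Next I would check that $\Lambda(f-S_\phi^Y)$ is a bona fide element of $L_2(\sph^d)$, so that the left-hand side of the asserted inequality makes sense. For any $g \in \hilb_\phi$ the expansion $\Lambda g = \sum_{n\geq0} \lambda_n T_n g$ converges absolutely and uniformly on $\sph^d$: one bounds the $n$-th term by $\lambda_n \sqrt{d_n}\,\norm{T_n g}_{L_2(\sph^d)}$ via Lemma~\ref{bgl_normsrat}, then applies Cauchy--Schwarz and uses $\sum_{n\geq0} d_n \lambda_n^2 a_n < \infty$. Hence $\Lambda g$ is continuous, in particular measurable and bounded, and therefore square-integrable; taking $g = f - S_\phi^Y$ settles the point. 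Integrating the squared pointwise bound then gives
\begin{equation*}
    \norm{\Lambda(f-S_\phi^Y)}_{L_2(\sph^d)}^2 = \int_{\sph^d} \abs{\Lambda(f-S_\phi^Y)(x)}^2 \,\mathrm{d}\nu(x) \leq M^2 \int_{\sph^d} \,\mathrm{d}\nu(x) = M^2,
\end{equation*}
and taking square roots yields the corollary.

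There is no genuine obstacle here; this is exactly why the remark introducing the statement can use the word \emph{easily}. If anything, the only step that warrants a line of justification is the square-integrability of $\Lambda(f-S_\phi^Y)$, which, as sketched above, follows immediately from the uniform convergence of its spherical-harmonic expansion under the standing summability assumption. The remainder is just the elementary fact that a uniform pointwise bound passes to the same bound in the $L_2$-norm of a probability space.
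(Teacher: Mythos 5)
Your proof is correct and is exactly the paper's argument: the paper simply integrates the uniform pointwise bound of Theorem~\ref{bgl_derivcon} over the sphere (a probability space under $\nu$), which is what you do, with the welcome extra remark justifying that $\Lambda(f-S_\phi^Y)$ lies in $L_2(\sph^d)$. The only cosmetic point is the index mismatch ($\sum_{n\geq N}$ in the theorem statement versus $\sum_{n>N}$ in the corollary and in your constant $M$), which is an inconsistency in the paper itself and is harmless, since the theorem's proof actually establishes the pointwise bound with $\sum_{n>N}$.
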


Before we give our improved error estimate we need to define a new
space $\hilb_{\Lambda \phi}$ by
\begin{equation*}
    \hilb_{\Lambda \phi} := \biggl\{ f \in \hilb_\phi:\
    \norm{f} _{\Lambda \phi} := \biggl( \sum_{n \geq 0}
    (\lambda_n a_n)^{-2} \norm{T_n f}_{L_2(\sph^d)}^2 \biggr)^{\frac{1}{2}}
    < \infty \biggr\}.
\end{equation*}
\begin{theorem}
Let $S_\phi^Y$ be the $\phi$\nobreakdash-spline interpolant to $f
\in \hilb_\phi$ on the point set $Y \subset \sph^d$, where
$h(Y,\sph^d) \leq 1/(2N)$, for some fixed $N \in \Z_+$. Let
$\Lambda$ be a pseudodifferential operator with symbol
$\{\lambda_{n}\}_{n\geq 0}$ satisfying Assumption \ref{bgl_ass} and
\begin{equation*}
    \sum_{n \geq 0} d_{n} \lambda_{n}^2 a_{n} < \infty.
\end{equation*} Then, for $f \in \hilb_{\Lambda \phi}$ and for all $x \in \sph^d$,
\begin{equation*}
    \abs{f(x)-S_\phi^Y(x)} \leq (1+2E) \biggl(\sum_{n > N}d_n
    \lambda_n^2 a_n \biggr)^{\frac{1}{2}} \biggl( \sum_{n > N} d_n a_n
    \biggr)^{\frac{1}{2}} \norm{f}_{\Lambda \phi}.
\end{equation*}
\end{theorem}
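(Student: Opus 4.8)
The plan is to carry Schaback's error-doubling argument over to the sphere, with Corollary~\ref{bgl_l2pseud} supplying the extra ingredient. Write $g := f - S_\phi^Y$. The whole theorem reduces to the single estimate
\begin{equation*}
    \norm{g}_\phi \le (1+2E)\biggl( \sum_{n > N} d_n \lambda_n^2 a_n \biggr)^{\frac12} \norm{f}_{\Lambda \phi}, \qquad f \in \hilb_{\Lambda\phi},
\end{equation*}
because the theorem then follows on multiplying this by the usual power function estimate $\abs{g(x)} \le ( \sum_{n>N} d_n a_n )^{1/2} \norm{g}_\phi$. That pointwise estimate is the $\Lambda = \text{identity}$ instance of Proposition~\ref{bgl_funcbnd}: picking $\mu = \sum_{y\in Y} b_y \delta_y$ with $\sum_{y\in Y} b_y p(y) = p(x)$ for all $p \in P_N$ (the lemma quoted above from~\cite{bgl_jetter99}, applied to $\gamma = \delta_x$), one writes $v(x) - \mu(v) = \sum_{n>N}\bigl(T_n v(x) - \sum_{y\in Y} b_y T_n v(y)\bigr)$ and bounds it with Lemma~\ref{bgl_normsrat} and Cauchy--Schwarz exactly as in the proof of Theorem~\ref{bgl_derivcon}, now with every $\lambda_n$ set to $1$.

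To prove the displayed norm estimate I would begin from the Pythagorean relation in its sharp form $\norm{g}_\phi^2 = \ip{g}{f}_\phi$, which is Proposition~\ref{bgl_normmin}(i). Expanding both arguments in spherical harmonics, $\ip{g}{f}_\phi = \sum_{n\ge 0} a_n^{-1} \ip{T_n g}{T_n f}_{L_2(\sph^d)}$, and the crucial move is to transfer $\Lambda$ onto $g$: for $n \ge 1$,
\begin{equation*}
    a_n^{-1}\ip{T_n g}{T_n f}_{L_2(\sph^d)} = (\lambda_n a_n)^{-1} \ip{T_n(\Lambda g)}{T_n f}_{L_2(\sph^d)},
\end{equation*}
while the $n = 0$ term drops out, since $\lambda_0 = 0$ under Assumption~\ref{bgl_ass} forces $T_0 f = 0$ for any $f$ with $\norm{f}_{\Lambda\phi} < \infty$. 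Two applications of Cauchy--Schwarz now finish this part --- first inside each $L_2(\sph^d)$ inner product, then on the sum over $n$, splitting the weight as $(\lambda_n a_n)^{-1} = 1 \cdot (\lambda_n a_n)^{-1}$ --- arranged so that the two factors reassemble into $\norm{\Lambda g}_{L_2(\sph^d)}$ and $\norm{f}_{\Lambda\phi}$, giving $\ip{g}{f}_\phi \le \norm{\Lambda g}_{L_2(\sph^d)} \norm{f}_{\Lambda\phi}$. Then Corollary~\ref{bgl_l2pseud} bounds $\norm{\Lambda g}_{L_2(\sph^d)} = \norm{\Lambda(f - S_\phi^Y)}_{L_2(\sph^d)}$ by $(1+2E)( \sum_{n>N} d_n \lambda_n^2 a_n )^{1/2}\norm{g}_\phi$, and dividing through by $\norm{g}_\phi$ (the case $\norm{g}_\phi = 0$ being trivial) produces the displayed estimate.

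I expect the only real obstacle to be the bookkeeping in this second step. One must check that $\Lambda g$ genuinely lies in $L_2(\sph^d)$, which it does because $\lambda_n^2 a_n \to 0$ under the summability hypothesis, so $\sum_n \lambda_n^2 \norm{T_n g}_{L_2(\sph^d)}^2 \le ( \sup_n \lambda_n^2 a_n )\norm{g}_\phi^2 < \infty$; that discarding the $n = 0$ mode is legitimate; and, above all, that the two Cauchy--Schwarz steps are organised so the weights recombine into exactly the two norms required and not into something weaker. The remaining pieces are routine: the power function factor is the $\Lambda = \text{identity}$ case of machinery already developed, and converting the quadratic inequality $\norm{g}_\phi^2 \le C\,\norm{g}_\phi \norm{f}_{\Lambda\phi}$ into the linear bound $\norm{g}_\phi \le C\,\norm{f}_{\Lambda\phi}$ is Schaback's standard doubling device.
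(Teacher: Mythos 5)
Your proposal is correct and follows essentially the same route as the paper: the identity $\norm{f-S_\phi^Y}_\phi^2=\ip{f-S_\phi^Y}{f}_\phi$ from Proposition~\ref{bgl_normmin}, a Cauchy--Schwarz step splitting the weight $a_n^{-1}=\lambda_n\cdot(\lambda_n a_n)^{-1}$ to produce $\norm{\Lambda(f-S_\phi^Y)}_{L_2(\sph^d)}\norm{f}_{\Lambda\phi}$, then Corollary~\ref{bgl_l2pseud}, cancellation of $\norm{f-S_\phi^Y}_\phi$, and finally the Jetter--St\"ockler--Ward estimate. Your explicit treatment of the $n=0$ mode (where $\lambda_0=0$ forces $T_0f=0$) is a small point the paper leaves implicit, but the argument is otherwise identical.
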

\begin{proof}
Firstly, using Proposition~\ref{bgl_normmin} and the Cauchy--Schwarz
inequality, we have
\begin{align*}
    \norm{f-S_\phi^Y}_\phi^2
    &= \ip{f-S_\phi^Y}{f}_\phi \\
    &= \sum_{n \geq 0} a_n^{-1} \ip{T_n(f - S_\phi^Y)}{T_n f}_{L_2(\sph^d)} \\
    &\leq  \biggl( \sum_{n \geq 0}
    \lambda_n^2 \norm{ T_n (f-S_\phi^Y) }_{L_2(\sph^d)}^2\biggr)^{\frac{1}{2}}
    \biggl( \sum_{n \geq 0} (\lambda_n a_n)^{-2}
    \norm{T_n f}_{L_2(\sph^d)}^2 \biggr)^{\frac{1}{2}}\\
    &= \norm{ \Lambda (f-S_\phi^Y) }_{L_2(\sph^d)} \norm{f}_{\Lambda \phi} \\
    &\leq  (1+2E) \biggl( \sum_{n > N} d_n \lambda_n^2 a_n \biggr)^{\frac{1}{2}}
    \norm{f-S_\phi^Y}_\phi \norm{f}_{\Lambda \phi},
\end{align*}
using Corollary~\ref{bgl_l2pseud}. Cancelling a factor of
$\norm{f-S_\phi^Y}_\phi$ from both sides yields
\begin{equation*}
    \norm{f-S_\phi^Y}_\phi \leq (1+2E) \biggl( \sum_{n >
    N} d_n \lambda_n^2 a_n \biggr)^{\frac{1}{2}}  \norm{f}_{\Lambda \phi}.
\end{equation*}
We can now employ the standard error estimate taken from Jetter,
St\"ockler and Ward~\cite{bgl_jetter99} (our
Theorem~\ref{bgl_derivcon} with $\lambda_n=1$ for all $n$) to give
the required result.
\end{proof}

\section{The Euclidean case} \label{bgl_euclid}

Our attention now turns to $\phi$\nobreakdash-spline interpolants of
the form
\begin{equation*}
    S_\phi^Y(x)= \sum_{y \in Y} \alpha_y \phi(\abs{x-y}),
\end{equation*}
where $\phi:\R_+ \rightarrow \R$. We will conduct our analysis for
positive definite basis functions $\phi \in L_1(\R^d)$ whose Fourier
transform satisfy, for some $s >0$,
\begin{equation}\label{bgl_decay}
    C_1 (1+\abs{x})^{-2s} \leq \widehat{\phi}(x) \leq C_2
    (1+\abs{x})^{-2s},
\end{equation}
for some positive constants $C_1$ and $C_2$, for example, the
Sobolev splines~\cite{bgl_dix94} or piecewise polynomial compactly
supported radial functions of minimal degree~\cite{bgl_wendland95}.
The expo\-sition contained in this section can be readily adapted to
include the polyharmonic splines~\cite{bgl_duchon78} as well. In
that case, the $\phi$\nobreakdash-spline interpolant must be
augmented by a polynomial $p$ with the extra degrees of freedom
taken up by the side conditions
\begin{equation*}
  \sum_{y \in Y} \alpha_y q(y) =0,
\end{equation*}
where $q$ is polynomial of the same degree (or less) as $p$.

For a domain $\Omega \subset \R^d$ let $L_2(\Omega)$ denote the
usual space of square-integrable functions on $\Omega$ with inner
product $\ip{\cdotc}{\cdotc}_{L_2(\Omega)}$ and norm
$\norm{\cdotc}_{L_2(\Omega)}$. For $k \in \Z_+$, the integer-order
Sobolev space is defined by
\begin{equation*}
    \hilb_{k} := \biggl\{f \in L_2(\R^d):\ D^\alpha f \in
    L_2(\R^d)\ \, \text{for all $\abs{\alpha}\leq k$}\biggr\},
\end{equation*}
with $D^\alpha$ understood in the distributional sense, which
carries the inner product
\begin{equation*}
    \ip{f}{g}_k := \ip{f}{g}_{L_2(\R^d)} + (f,g)_k,
\end{equation*}
where $(f,g)_k$ denotes the Sobolev semi-inner product
\begin{equation*}
    (f,g)_{k} :=  \sum_{\abs{\alpha} = k} c_\alpha^{(k)}
    \int_{\R^d} (D^\alpha f)(x) (\overline{D^\alpha g})(x)
    \,\mathrm{d}x,
\end{equation*}
with associated semi-norm $\abs{\cdotc}_k :=
(\cdotc,\cdotc)^{1/2}_k$. The coefficients $c_\alpha^{(k)}$ have
been chosen so that
\begin{equation*}
    \sum_{\abs{\alpha}=k} c_\alpha^{(k)} x^{2\alpha} = \abs{x}^{2k}.
\end{equation*}

We can write the semi-norm, using the Fourier transform, in the
alternative form
\begin{equation*}
    \abs{f}_{k}^2 = \int_{\R^d} \abs{\widehat{f}(x)}^2
    \abs{x}^{2k} \,\mathrm{d}x,
\end{equation*}
which facilitates the definition of fractional-order Sobolev space,
$\hilb_{s}$, for $s >0$, which has the semi-norm
\begin{equation}\label{bgl_altform}
    \abs{f}_{s}^2 :=  \int_{\R^d} \abs{\widehat{f}(x)}^2
    \abs{x}^{2s} \,\mathrm{d}x.
\end{equation}
The space $\hilb_{s}$ is complete with respect to
\begin{equation*}
  \norm{f}_{s} := \left\{
  \begin{aligned}
        &\Bigl( \norm{f}_{L_2(\R^d)}^2+\abs{f}_{s}^2\Bigr)^{\frac{1}{2}}&\quad& \text{if $s \in \Z_+$,} \\
        &\Bigl( \norm{f}_{\lfloor s \rfloor}^2+\abs{f}_{s}^2\Bigr)^{\frac{1}{2}}&\quad& \text{otherwise,}
    \end{aligned}\right.
\end{equation*}
and, whenever we have $s>d/2$, $\hilb_{s}$ is continuously embedded
in the continuous functions. The native space for $\phi$
satisfying~\eqref{bgl_decay} is equivalent to $\hilb_s$.

We now wish to make local definitions of our function spaces, which
we shall denote by $\hilb_{s}(\Omega)$. For $s \in \Z_+$ the
definition should be is clear. In what follows we also need the
local fractional-order Sobolev spaces:
\begin{equation*}
  \hilb_{s}(\Omega) := \biggr\{f \in \hilb_{\lfloor s \rfloor}(\Omega):\ \norm{f}_{s,\Omega} :=
  \Bigl( \norm{f}_{\lfloor s \rfloor,\Omega}^2+\abs{f}_{s,\Omega}^2\Bigr)^{\frac{1}{2}}<\infty\biggl\},
\end{equation*}
where $\abs{f}_{s,\Omega}$ is the local fractional-order Sobolev
semi-norm obtained by rewriting~\eqref{bgl_altform} in an equivalent
direct form, i.e., not defined through the Fourier transform of $f$
(see, e.g., Adams~\cite[p. 214]{bgl_adams75}). For our analysis we
find it more useful to exploit an equivalent wavelet representation
for the local Sobolev norm~\cite{bgl_cohen00}.

To introduce this equivalent norm we stipulate that the bounded
domain, $\Omega$, admits a local multiresolution of closed subspaces
$\{V_n(\Omega)\}_{n\geq 0}$ of $L_2(\Omega)$:
\begin{equation*}
    V_0(\Omega) \subset V_1(\Omega) \subset \dotsb \subset
    L_2(\Omega),\qquad \overline{\bigcup_{n \geq 0} V_n(\Omega)} =
    L_2(\Omega).
\end{equation*}
Cohen et al.~\cite{bgl_cohen00} give sufficient conditions on
$\Omega$ to admit such a local multiresolution. In particular, for
$d=2$, those domains whose boundaries have certain piecewise
Lipschitz smoothness are admissible. The following is an incidence
of~\cite[Theorem 4.2]{bgl_cohen00}.
\begin{theorem}\label{bgl_wavelet}
  Suppose $\Omega \subset \R^d$ is a bounded domain that
  admits a local multi\-resolution $\{V_n(\Omega)\}_{n\geq 0}$ for
  $L_2(\Omega)$. For $n \geq 0$, let $Q_n^\Omega$
  denote the orthogonal projection from $L_2(\Omega)$ onto
  $W_n(\Omega) = V_{n}(\Omega) \ominus V_{n-1}(\Omega)$ with the convention
  that $V_{-1}(\Omega) = \{0\}$. For each $s \ge 0$, let $\Lambda_s$
  be the pseudodifferential operator on $\Omega$ defined via
  \begin{equation*}
    \Lambda_s := \sum_{n\geq 0} 2^{ns} Q_n^\Omega.
  \end{equation*}
  Then, there exists positive
  constants $C_1$ and $C_2$ such that, for all $f \in \hilb_{s}(\Omega)$,
  \begin{equation*}
    C_1 \norm{\Lambda_s f}_{L_2(\Omega)} \leq \norm{f}_{s,\Omega} \leq
    C_2 \norm{\Lambda_s f}_{L_2(\Omega)}.
  \end{equation*}
\end{theorem}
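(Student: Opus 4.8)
The plan is to reduce the statement to the classical wavelet characterisation of Sobolev norms, which is precisely~\cite[Theorem 4.2]{bgl_cohen00}; I sketch the argument to indicate where the work lies. First I would record that the local multiresolution induces the orthogonal decomposition $L_2(\Omega) = \bigoplus_{n\geq0} W_n(\Omega)$, so that every $f \in L_2(\Omega)$ satisfies $f = \sum_{n\geq0} Q_n^\Omega f$ with $\norm{f}_{L_2(\Omega)}^2 = \sum_{n\geq0}\norm{Q_n^\Omega f}_{L_2(\Omega)}^2$. Because $\Lambda_s f = \sum_{n\geq0} 2^{ns} Q_n^\Omega f$ by definition and the $W_n(\Omega)$ are mutually orthogonal, the same orthogonality yields the clean identity
\begin{equation*}
  \norm{\Lambda_s f}_{L_2(\Omega)}^2 = \sum_{n\geq0} 2^{2ns}\norm{Q_n^\Omega f}_{L_2(\Omega)}^2 .
\end{equation*}
Thus the theorem is exactly the assertion that $\norm{f}_{s,\Omega}^2$ is comparable, with constants independent of $f$, to the weighted square sum on the right.

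The two inequalities would then come from the standard Jackson--Bernstein machinery. For the bound $C_1\norm{\Lambda_s f}_{L_2(\Omega)} \le \norm{f}_{s,\Omega}$ I would use a Bernstein-type inequality: since the scaling functions of the multiresolution are regular enough to lie in $\hilb_s(\Omega)$, one has $\norm{g}_{s,\Omega} \le C\,2^{ns}\norm{g}_{L_2(\Omega)}$ for all $g \in V_n(\Omega)$, hence for $g \in W_n(\Omega)$; writing $f = \sum_n Q_n^\Omega f$, applying the triangle inequality with this estimate, inserting a geometric weight, and using Cauchy--Schwarz to pass from the resulting $\ell_1$-type sum to an $\ell_2$-type sum gives $\norm{f}_{s,\Omega} \le C\bigl(\sum_n 2^{2ns}\norm{Q_n^\Omega f}_{L_2(\Omega)}^2\bigr)^{1/2}$. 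For the reverse bound $\norm{f}_{s,\Omega} \le C_2\norm{\Lambda_s f}_{L_2(\Omega)}$ I would use a Jackson-type inequality: the multiresolution reproduces polynomials of sufficiently high degree, so $\mathrm{dist}_{L_2(\Omega)}(f,V_n(\Omega)) \le C\,2^{-ns}\norm{f}_{s,\Omega}$, and since $Q_n^\Omega f$ is the difference of the orthogonal projections of $f$ onto $V_n(\Omega)$ and $V_{n-1}(\Omega)$ it follows that $\norm{Q_n^\Omega f}_{L_2(\Omega)} \le C\,2^{-ns}\norm{f}_{s,\Omega}$; squaring, multiplying by $2^{2ns}$, and summing the geometric series yields $\sum_n 2^{2ns}\norm{Q_n^\Omega f}_{L_2(\Omega)}^2 \le C\norm{f}_{s,\Omega}^2$. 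Combining these with the identity above completes the argument.

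The real obstacle is not either summation but the justification of the Jackson and Bernstein estimates \emph{on the bounded domain $\Omega$} rather than on all of $\R^d$: one needs a multiresolution whose scaling and wavelet functions are adapted near $\partial\Omega$ so that the polynomial-reproduction (Jackson) and $\hilb_s$-regularity (Bernstein) properties hold uniformly up to the boundary, which is where the geometric hypotheses on $\Omega$ --- for $d=2$, a piecewise Lipschitz boundary --- enter. This boundary construction is the technical heart of~\cite{bgl_cohen00}, so for our purposes the honest course is to invoke~\cite[Theorem 4.2]{bgl_cohen00} directly; the only genuinely new content above is the trivial orthogonality identity that rephrases their norm equivalence in the operator form $\norm{f}_{s,\Omega} \sim \norm{\Lambda_s f}_{L_2(\Omega)}$ used in the sequel.
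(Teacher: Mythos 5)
Your proposal is correct and ends up exactly where the paper does: the paper offers no proof of its own but simply states the result as an instance of~\cite[Theorem 4.2]{bgl_cohen00}, which is the citation you invoke, and your added orthogonality identity $\norm{\Lambda_s f}_{L_2(\Omega)}^2 = \sum_{n\geq0} 2^{2ns}\norm{Q_n^\Omega f}_{L_2(\Omega)}^2$ is the correct (and routine) translation of that theorem into the operator form used here. The Jackson--Bernstein sketch is a reasonable account of what lies inside the cited result, but it is not needed beyond the citation, as you yourself note.
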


Now, let us return to the task at hand. For $\phi$
satisfying~\eqref{bgl_decay}, we will denote the
$\phi$\nobreakdash-spline interpolant on the point set $Y$ by
$S_\phi^Y$. The standard error estimate in this context is
\begin{equation} \label{bgl_standard}
\abs{f(x) - S_\phi^Y(x)} \le C h^{s-d/2} \norm{f - S_\phi^Y}_{s},
\end{equation}
see~\cite{bgl_schaback93}. If $f$ is smoother (and satisfies some
boundary conditions) we can get a better rate of convergence.

We will exploit the fact that $\hilb_{\mu}(\Omega)$, for $0<\mu<s$,
is an interpolation space lying between $L_2(\Omega)$ and
$\hilb_{s}(\Omega)$ (see Bergh and L{\"o}fstr{\"o}m~\cite[p.
131]{bgl_bergh76}). We can then use the standard interpolation
theorem concerning the norms of operators bounded on the extreme
spaces to infer a bound on the norm for the interpolation space. For
further information on interpolation spaces the reader can consult,
e.g., Bergh and L{\"o}fstr{\"o}m~\cite{bgl_bergh76}. We use the
following interpolation theorem.
\begin{proposition}
Let $0 < \mu < s$. Further suppose that $T:\hilb_{s}(\Omega)
\rightarrow L_2(\Omega)$, and $T:\hilb_{s}(\Omega) \rightarrow
\hilb_{s}(\Omega)$ is a bounded operator. Then,
\begin{equation*}
\norm{T}_{\hilb_{s}(\Omega) \rightarrow \hilb_{\mu}(\Omega)} \le
\norm{T}^{1-\mu/s}_{\hilb_{s}(\Omega) \rightarrow L_2(\Omega)}
\norm{T}^{\mu/s}_{\hilb_{s}(\Omega) \rightarrow \hilb_{s}(\Omega)}.
\end{equation*}
\end{proposition}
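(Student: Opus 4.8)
The plan is to recognise this as a direct instance of the exact interpolation theorem for the complex (or real $K$-) method, combined with the identification, quoted from Bergh and L\"ofstr\"om, of $\hilb_{\mu}(\Omega)$ as the interpolation space at parameter $\theta := \mu/s$ between $L_2(\Omega)$ and $\hilb_{s}(\Omega)$; that is, $\hilb_{\mu}(\Omega)=[L_2(\Omega),\hilb_{s}(\Omega)]_{\theta}$ with $\theta=\mu/s$, with equivalent norms. So there is really nothing to prove beyond unwinding the definitions and invoking the abstract machinery.

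Concretely, I would fix $\theta=\mu/s\in(0,1)$ and set $M_0:=\norm{T}_{\hilb_{s}(\Omega)\to L_2(\Omega)}$ and $M_1:=\norm{T}_{\hilb_{s}(\Omega)\to\hilb_{s}(\Omega)}$. Viewing $T$ as a morphism of Banach couples from $(\hilb_{s}(\Omega),\hilb_{s}(\Omega))$ into $(L_2(\Omega),\hilb_{s}(\Omega))$ --- the latter being a compatible couple since $\hilb_{s}(\Omega)$ embeds continuously in $L_2(\Omega)$ --- $T$ is bounded from the first endpoint to the first with norm $M_0$ and from the second endpoint to the second with norm $M_1$. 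The exact interpolation theorem then gives $\norm{T}_{[\hilb_{s}(\Omega),\hilb_{s}(\Omega)]_{\theta}\to[L_2(\Omega),\hilb_{s}(\Omega)]_{\theta}}\le M_0^{1-\theta}M_1^{\theta}$. Since $[\hilb_{s}(\Omega),\hilb_{s}(\Omega)]_{\theta}=\hilb_{s}(\Omega)$ and $[L_2(\Omega),\hilb_{s}(\Omega)]_{\theta}=\hilb_{\mu}(\Omega)$, this is precisely the asserted bound, with $1-\theta=1-\mu/s$ and $\theta=\mu/s$.

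Alternatively --- and this keeps the argument inside the tools already assembled in the paper --- one can run the interpolation inequality explicitly through the wavelet norm of Theorem~\ref{bgl_wavelet}. Writing $g=Tf$ and using the $L_2(\Omega)$-orthogonality of the projections $Q_n^{\Omega}$, we have $\norm{\Lambda_{\mu}g}_{L_2(\Omega)}^2=\sum_{n\ge0}2^{2n\mu}\norm{Q_n^{\Omega}g}_{L_2(\Omega)}^2$; since $2n\mu=(1-\theta)\,0+\theta\,2ns$, H\"older's inequality for sums with conjugate exponents $1/(1-\theta)$ and $1/\theta$ yields $\norm{\Lambda_{\mu}g}_{L_2(\Omega)}\le\norm{g}_{L_2(\Omega)}^{1-\theta}\norm{\Lambda_{s}g}_{L_2(\Omega)}^{\theta}$. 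Substituting $\norm{g}_{L_2(\Omega)}\le M_0\norm{f}_{s,\Omega}$ and, via Theorem~\ref{bgl_wavelet}, $\norm{\Lambda_{s}g}_{L_2(\Omega)}\le C_1^{-1}\norm{g}_{s,\Omega}\le C_1^{-1}M_1\norm{f}_{s,\Omega}$, and then comparing $\norm{\Lambda_{\mu}g}_{L_2(\Omega)}$ with $\norm{g}_{\mu,\Omega}$ again through Theorem~\ref{bgl_wavelet}, recovers the estimate (up to the fixed constants of that theorem).

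The only delicate point --- and the one I expect to be the main thing to get right, rather than any genuine analytic difficulty --- is the constant: both the Bergh--L\"ofstr\"om identification and the $C_1,C_2$ of Theorem~\ref{bgl_wavelet} are norm equivalences, not isometries, so strictly the inequality holds with a multiplicative constant depending only on $\Omega$, $s$, $\mu$. To state it cleanly with no constant one either adopts the interpolation-space norm on $\hilb_{\mu}(\Omega)$, under which $[L_2(\Omega),\hilb_{s}(\Omega)]_{\theta}=\hilb_{\mu}(\Omega)$ isometrically and the exact interpolation theorem gives the stated bound verbatim, or simply absorbs the equivalence constants, which is harmless for the fill-distance error estimates that follow.
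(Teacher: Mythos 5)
Your first argument is exactly what the paper does: it states this proposition without proof, justifying it by the Bergh--L\"ofstr\"om identification of $\hilb_{\mu}(\Omega)$ as an interpolation space between $L_2(\Omega)$ and $\hilb_{s}(\Omega)$ together with the standard exact interpolation theorem for operators, so your proposal is correct and essentially the paper's approach. Your remark about the norm-equivalence constants (and the self-contained wavelet/H\"older alternative) is sound and harmless here, since the paper only uses the estimate with a generic constant $C$ in the subsequent bound~\eqref{bgl_interp}.
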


Since we can write the $\hilb_s$-norm in entirely direct form, we
are at liberty to utilise Duchon's localisation
technique~\cite{bgl_duchon78} to enhance the standard error
estimate~\eqref{bgl_standard}. Therefore, if $\Omega$ is bounded and
satisfies an interior cone condition then, for $f \in
\hilb_{s}(\Omega)$, $s>d/2$, and sufficiently small $h=h(Y,\Omega)$,
\begin{equation*}
\norm{f-S_\phi^Y}_{L_2(\Omega)} \le C h^s
\norm{f-S_\phi^Y}_{s,\Omega}.
\end{equation*}
Writing $T f = f-S_\phi^Y$, we see, using the last proposition, that,
for $0 < \mu < s$,
\begin{align}
\norm{f-S_\phi^Y}_{\mu,\Omega} &\le (Ch^s
\norm{f-S_\phi^Y}_{s,\Omega})^{1-\mu/s}
\norm{f-S_\phi^Y}_{s,\Omega}^{\mu/s}  \nonumber\\
&= C h^{s-\mu} \norm{f-S_\phi^Y}_{s,\Omega}. \label{bgl_interp}
\end{align}

We can now prove our main result of this section, which is a
generalisation of that of Schaback~\cite{bgl_schaback99}.
\begin{theorem}\label{bgl_euclid_main}
Suppose $\Omega \subset \R^d$ is bounded, satisfies an interior cone
condition and admits a local multiresolution. Let $s>d/2$ and let
$S_\phi^Y$ be the $\phi$\nobreakdash-spline interpolant to $f \in
\hilb_{s}$ on the point set $Y \subset \Omega$. Suppose further that
$f \in \hilb_{\nu}$, for $s < \nu \leq 2s$, and that $f$ is
compactly supported in $\Omega$. Then there exists $C>0$,
independent of $f$ and $h=h(Y,\Omega)$, such that for all $x \in
\Omega$ and sufficiently small $h$,
\begin{equation*}
\abs{f(x) - S_\phi^Y(x)} \le Ch^{\nu - d/2} \norm{f}_{\nu,\Omega}.
\end{equation*}
\end{theorem}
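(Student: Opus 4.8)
The plan is to mirror the proof of the second theorem of Section~\ref{bgl_sphere}: use the extra smoothness of $f$ to upgrade the native-space error $\norm{f-S_\phi^Y}_\phi$ from the generic $O(1)$ to $O(h^{\nu-s})$, and then feed this into the standard pointwise estimate~\eqref{bgl_standard}. Write $h=h(Y,\Omega)$, and recall that $\norm{\cdotc}_\phi$ is equivalent to $\norm{\cdotc}_{s}$ and that, for a function supported strictly inside $\Omega$, $\norm{\cdotc}_{\nu}$ is comparable to $\norm{\cdotc}_{\nu,\Omega}$. By the Pythagorean identity of Proposition~\ref{bgl_normmin},
\begin{equation*}
  \norm{f-S_\phi^Y}_\phi^2 = \ip{f-S_\phi^Y}{f}_\phi .
\end{equation*}

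The key step---and the one I expect to be the main obstacle---is to bound this pairing by splitting the smoothness index $2s$ carried by the native-space inner product as $(2s-\nu)+\nu$, placing the low part on the error and the high part on $f$, and using the compact support of $f$ in $\Omega$ to convert a global pairing into a local one:
\begin{equation*}
  \bigl| \ip{f-S_\phi^Y}{f}_\phi \bigr| \le C\,\norm{f-S_\phi^Y}_{2s-\nu,\Omega}\,\norm{f}_{\nu,\Omega}.
\end{equation*}
To obtain this one rewrites the inner product through the local multiresolution of Theorem~\ref{bgl_wavelet}, applies the Cauchy--Schwarz inequality across the levels with the weight $2^{2ns}$ factored as $2^{n(2s-\nu)}\cdot 2^{n\nu}$, and recognises the two resulting factors as (comparable to) $\norm{\Lambda_{2s-\nu}(f-S_\phi^Y)}_{L_2(\Omega)}$ and $\norm{\Lambda_\nu f}_{L_2(\Omega)}$, i.e.\ to the two local Sobolev norms above; when $\nu=2s$ the first factor is simply $\norm{f-S_\phi^Y}_{L_2(\Omega)}$. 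The delicate point is that $S_\phi^Y$ is \emph{not} compactly supported, so one must check---using that $f$ vanishes near $\partial\Omega$---that the tail of $S_\phi^Y$ outside $\Omega$ does not contribute to the pairing.

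With this localised bound in hand, apply the interpolation-type estimate~\eqref{bgl_interp} with $\mu=2s-\nu$ (valid since then $0<\mu<s$; for the endpoint $\nu=2s$ one uses instead Duchon's localisation estimate, displayed just before~\eqref{bgl_interp}, which is the $\mu=0$ case), together with boundedness of restriction to $\Omega$ and norm equivalence, to get
\begin{equation*}
  \norm{f-S_\phi^Y}_{2s-\nu,\Omega}\le C h^{\nu-s}\norm{f-S_\phi^Y}_{s,\Omega}\le C h^{\nu-s}\norm{f-S_\phi^Y}_{s}\le C h^{\nu-s}\norm{f-S_\phi^Y}_\phi .
\end{equation*}
Chaining the displays and cancelling one factor of $\norm{f-S_\phi^Y}_\phi$ yields $\norm{f-S_\phi^Y}_\phi\le C h^{\nu-s}\norm{f}_{\nu,\Omega}$, and substituting this into~\eqref{bgl_standard} gives, for $x\in\Omega$ and sufficiently small $h$,
\begin{equation*}
  \abs{f(x)-S_\phi^Y(x)}\le C h^{s-d/2}\norm{f-S_\phi^Y}_{s}\le C h^{\nu-d/2}\norm{f}_{\nu,\Omega},
\end{equation*}
which is the assertion. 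Besides the localisation step, the only things needing attention are the admissibility of the possibly fractional index $2s-\nu$ in~\eqref{bgl_interp} and the elementary norm comparabilities used above.
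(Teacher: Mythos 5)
Your proposal is correct and follows essentially the same route as the paper: the Pythagorean identity, localisation via the compact support of $f$, the wavelet norm of Theorem~\ref{bgl_wavelet} with the weight split $4^{ns}=2^{n(2s-\nu)}\cdot 2^{n\nu}$, Cauchy--Schwarz, the interpolation estimate~\eqref{bgl_interp} with $\mu=2s-\nu$, and substitution into~\eqref{bgl_standard}. Your explicit handling of the endpoint $\nu=2s$ and your flagging of the localisation of the pairing (since $S_\phi^Y$ is not compactly supported) are points the paper passes over silently, but they do not change the argument.
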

\begin{proof}
From Proposition~\ref{bgl_normmin} we know that
\begin{equation*}
    \ip{f-S_\phi^Y}{S_\phi^Y}_{s} = 0,
\end{equation*}
so that
\begin{equation*}
    \norm{f-S_\phi^Y}_{s}^2 = \ip{f-S_\phi^Y}{f}_{s} \leq C
    \ip{f-S_\phi^Y}{f}_{s,\Omega},
\end{equation*}
where we have used the compact support of $f$ in $\Omega$. Now, the
equivalent norm from Theorem~\ref{bgl_wavelet} gives us
\begin{align*}
    \norm{f-S_\phi^Y}_{s}^2 &\leq C \ip{\Lambda_s(f-S_\phi^Y)}{\Lambda_s
    f}_{L_2(\Omega)}\\
    &= C \sum_{n\geq 0} 4^{ns}
    \ip{Q^\Omega_n (f-S_\phi^Y)}{Q^\Omega_n f}_{L_2(\Omega)},
\end{align*}
and successive applications of the continuous and discrete
Cauchy--Schwarz inequality yields
\begin{align*}
    \norm{f-S_\phi^Y}_{s}^2 &\leq C \sum_{n\geq 0} \norm{2^{n(2s-\nu)} Q^\Omega_n
    (f-S_\phi^Y)}_{L_2(\Omega)}
    \norm{2^{n\nu} Q^\Omega_n f}_{L_2(\Omega)}\\
    &\leq C \biggl(\sum_{n\geq 0} \norm{2^{n(2s-\nu)} Q^\Omega_n
    (f-S_\phi^Y)}_{L_2(\Omega)}^2\biggr)^{\frac{1}{2}}\biggl(\sum_{n\geq 0}
    \norm{2^{n\nu} Q^\Omega_n
    f}_{L_2(\Omega)}^2\biggr)^{\frac{1}{2}}\\
    &= C \norm{\Lambda_{2s-\nu} (f-S_\phi^Y)}_{L_2(\Omega)} \norm{\Lambda_\nu
    f}_{L_2(\Omega)}.
\end{align*}
Thus, using the norm equivalence from Theorem~\ref{bgl_wavelet}
again together with~\eqref{bgl_interp}, we have
\begin{align*}
    \norm{f-S_\phi^Y}_{s}^2 &\leq C \norm{f-S_\phi^Y}_{2s-\nu,\Omega}
    \norm{f}_{\nu,\Omega}\\
    &\leq C h^{\nu-s}
    \norm{f-S_\phi^Y}_{s,\Omega}\norm{f}_{\nu,\Omega}\\
    &\leq C h^{\nu-s}
    \norm{f-S_\phi^Y}_{s}\norm{f}_{\nu,\Omega},
\end{align*}
and cancelling one power of $\norm{f-S_\phi^Y}_{s}$ gives
\begin{equation*}
\norm{f-S_\phi^Y}_{s} \le C h^{\nu-s} \norm{f}_{\nu,\Omega}.
\end{equation*}
The result follows by substitution into the standard error estimate
\eqref{bgl_standard}.
\end{proof}

\section*{Acknowledgements} We are grateful to Professor Mikhail
Shubin for useful input into questions of pseudodifferential
operators and alternative Sobolev space semi-norms.

\bibliographystyle{siam}

\end{document}